\newtheorem{theorem}{Theorem}
\newtheorem{lemma}[theorem]{Lemma}
\newtheorem{definition}[theorem]{Definition}
\newcommand{\norm}[1]{\ensuremath{\left\|#1\right\|}} 
\title{\LARGE \bf
Understanding Robust Control Theory Via Stick Balancing
}
\author{Yoke Peng Leong and John C. Doyle% <-this % stops a space
\thanks{Y. P. Leong and J. C. Doyle are with the Control and Dynamical Systems, California Institute of Technology, 
Pasadena, CA 91125, USA {\tt\small ypleong@caltech.edu, doyle@caltech.edu}.}%
}
\begin{document}

\maketitle
\thispagestyle{empty}
\pagestyle{empty}

%%%%%%%%%%%%%%%%%%%%%%%%%%%%%%%%%%%%%%%%%%%%%%%%%%%%%%%%%%%%%%%%%%%%%%%%%%%%%%%%
%%%%%%%%%%%%%%%%%%%%%%%%%%%%%%%%%%%%%%%%%%%%%%%%%%%%%%%%%%%%%%%%%%%%%%%%%%%%%%%%
\begin{abstract}

Robust control theory studies the effect of noise, disturbances, and other uncertainty on system performance. Despite growing recognition across science and engineering that robustness and efficiency tradeoffs dominate the evolution and design of complex systems, the use of robust control theory remains limited, partly because the mathematics involved is relatively inaccessible to nonexperts, and the important concepts have been inexplicable without a fairly rich mathematics background. This paper aims to begin changing that by presenting the most essential concepts in robust control using human stick balancing, a simple case study popular in both the sensorimotor control literature and extremely familiar to engineers. With minimal and familiar models and mathematics, we can explore the impact of unstable poles and zeros, delays, and noise, which can then be easily verified with simple experiments using a standard extensible pointer. Despite its simplicity, this case study has extremes of robustness and fragility that are initially counter-intuitive but for which simple mathematics and experiments are clear and compelling. The theory used here has been well-known for many decades, and the cart-pendulum example is a standard in undergrad controls courses, yet a careful reconsidering of both leads to striking new insights that we argue are of great pedagogical value.

\end{abstract}

%%%%%%%%%%%%%%%%%%%%%%%%%%%%%%%%%%%%%%%%%%%%%%%%%%%%%%%%%%%%%%%%%%%%%%%%%%%%%%%%
\section{INTRODUCTION}

A large and growing community in science and engineering utilizes control theory to study the evolution and design of complex systems \cite{Reeves_balance_2013,todorov2005stochastic,sontag2005molecular,cabrera_human_2004,Chandra08072011,li2014robust,doyle2011architecture,stelling2004robustness,braun2011risk,kuo_control_1995}. Noise and other uncertainties are ubiquitous in complex systems, and feedback control is particularly effective in correcting a system that deviates from a planned trajectory due to noise and uncertainties. 
%Recently, researchers among the sensorimotor control community begin to consider the impact of noise on the performances of different sensorimotor control tasks \cite{todorov2005stochastic,braun2011risk,faisal2008noise,franklin2011computational} using optimal control theory such as linear quadratic Gaussian regulator (LQG). Despite the merits of the optimal control theory, 

Robust control theory as a powerful tool to investigate the effect of noise, disturbances, and other uncertainties on system performance. Despite its many advantages, robust control theory is minimally used because the mathematics involved is relatively inaccessible to nonexperts, and the important concepts have been inexplicable without a fairly rich mathematics background. This paper aims to begin changing that by presenting the most essential concepts in robust control using human stick balancing. This simple case study is popular in the sensorimotor control literature \cite{Reeves_balance_2013,harrison_pole_2014,milton_delayed_2011}. Moreover, we model it as an inverted pendulum on a moving cart that is extremely familiar to engineers and scientists \cite{kuo2005optimal,Boubaker_survey_2012}. 

The mathematics here is all standard for undergrad engineers, but not for many scientists and physicians, so an important next step is simplifying the underlying math, which is addressed in a related paper \cite{YorieCDC2016}. But this paper should be otherwise read as if aimed at an audience not expert in control, and thus will be more pedantic than would typically be appropriate in a CDC paper.

Robust control theory analyses the amplification of noise and uncertainty in the closed-loop system. This analysis is insightful because it breaks down system performance into noise that is harder to modify and other system parameters that can be redesigned if necessary. Furthermore, robustness is an inherent system property that is independent of the controller design. Hence, one can study the fundamental limits of a closed-loop system without a priori assuming a particular controller architecture. This feature is advantageous, especially in biology, because as better biological control models are developed, the analysis will hold true as long as the underlying physical setup of the system (i.e. physiology and anatomy) is the same.

With the minimal and familiar models, we can explore the impact of unstable poles and zeros, delays, and noise, which can then be easily verified with simple experiments using a standard extensible pointer. Poles and zeros are fundamental quantities in control theory that have observable implications in human stick balancing. Most interestingly, the zero of a system, an often abstract concept, manifests itself as a function of the fixation point of a person during stick balancing. In addition, emergence of oscillations commonly observed in many biological systems \cite{Chandra08072011,mitchell2015oscillatory} when changing certain system parameters may seem cryptic, but it is an unavoidable side effect of degrading system robustness caused by the varying parameters. Despite its simplicity, this case study has extremes of robustness and fragility that are initially counter-intuitive but for which simple math and experiments are clear and compelling.

The theory used here has been well-known for many decades and the cart-pendulum example is a standard in undergrad controls courses, yet a careful reconsidering of both leads to striking new insights that we argue are of great pedagogical value. We hope our approach will help clarify some of the confusions surrounding observations in biology, and bridge the gap between the control community and other research communities.

% This paper presents important concepts in robust control theory using a simple case study, human stick balancing, with the goal of making the commonly mathematically convoluted concepts accesible to researchers in other fields. Human stick balancing is modeled as an inverted pendulum on a moving cart, a simple model, yet encompassing a wide variety of concepts in robust control theory. This model is widely studied by control theorists, and it is used by many researchers as a proxy for other complex systems including robotics and human balancing \cite{Boubaker_survey_2012,kuo2005optimal}. 

The rest of the paper is organized as follows: Section \ref{sec:model} presents the stick balancing model as an inverted pendulum on a moving cart. Section \ref{sec:poleandzero} discusses the concept of pole and zero for a system. The concept of system robustness is introduced in Section \ref{sec:robustness} including discussions on the sensitivity function, the Bode's integral, and the waterbed effect. Section \ref{sec:discussion} highlights some important issues in system design, and Section \ref{sec:conclusion} summarizes the ideas in this paper. Some proof are provided for completeness, but readers may skip them and still follow the paper's discussions. 

%%%%%%%%%%%%%%%%%%%%%%%%%%%%%%%%%%%%%%%%%%%%%%%%%%%%%%%%%%%%%%%%%%%%%%%%%%%%%%%%
\section{SIMPLIFIED MODEL}\label{sec:model}

% We begin the discussion with an analytical analysis of human stick balancing using feedback control theory. Particularly, the analysis will focus on robustness of the system, which is related to a person's ability to balance, and draw connections between the theoretical predictions and practical observations. The model of balancing an inverted pendulum on a moving cart is presented first, and the stability and robustness analysis follows.

Humans stick balancing is modeled as a one dimensional inverted pendulum on a moving cart. As a model of real three dimensional balancing, it is grossly oversimplified, but will prove to be surprisingly useful for illustrating important concepts in robust control theory. In fact, one can experimentally observe the theoretical predictions using this extremely simple and analytically tractable model.  

\begin{figure}[b!]
	\centering
	\includegraphics[width = \linewidth, clip =true, trim=0.5cm 0cm 0.5cm 1cm]{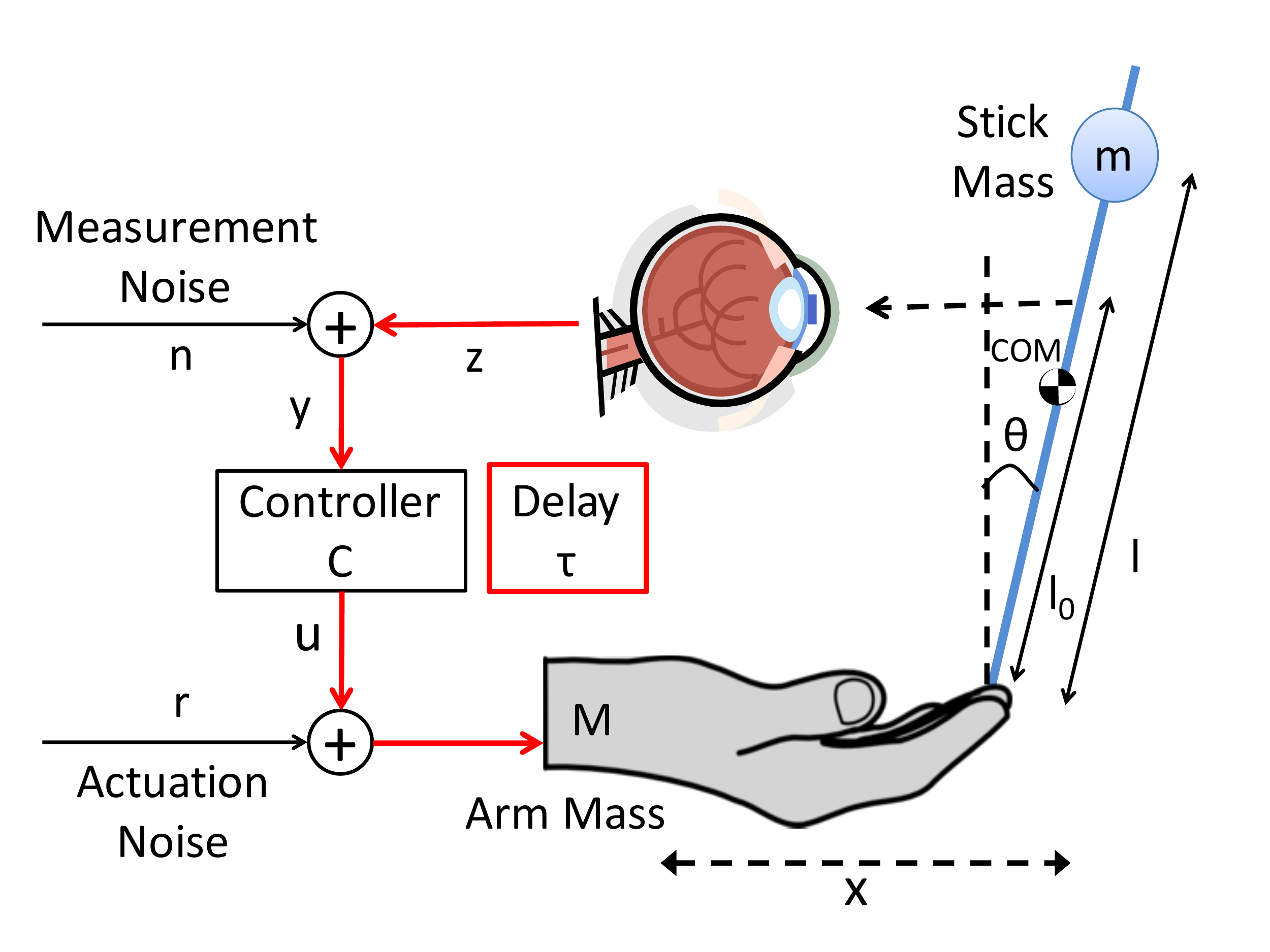}
	\caption{A schematic of balancing an inverted pendulum on one's palm.}
	% \vspace{-10pt}
	\label{fig:InvertedPendulum} 
\end{figure}

The dynamics of a one dimensional inverted pendulum on a moving cart (see Fig. \ref{fig:InvertedPendulum}) is given by
\begin{gather}
(M+m)\ddot{x}+m l (\ddot{\theta}\cos\theta-\dot{\theta}^2\sin \theta) =u +r \nonumber\\
m(\ddot{x}\cos \theta + l\ddot{\theta}- g \sin \theta) = 0\nonumber\\
z = x + l_0 \sin \theta \qquad y = z + n
 \label{eq:ndyanmics}
\end{gather}
where $y$ is the position measurement using the eye of $z$, the position of interest, $u$ is the control force, $r$ is the actuation noise, $n$ is the sensor noise, $\theta$ is the pendulum tilt angle from the vertical, and $x$ is the horizontal displacement of the arm. $M$ is the effective mass of the cart (arm or human body), $m$ is the effective mass of the stick, $g$ is the gravitational acceleration, $l_0$ is the fixation point, and $l$ is the effective stick length. Sensorimotor control delay due to signal transmissions and processing is represented as $\tau$ whereby visual processing is the major contributor \cite{lennie_physiological_1981,nijhawan_visual_2008,cavanagh_electromechanical_1979}.

To specify the rigid body motion of a stick and human in term of the point mass system dynamics in \eqref{eq:ndyanmics}, we found the effective masses $m = \frac{3}{4} m'$ and $M = \frac{1}{4} m' +M'$ where $M'$ is the human mass and $m'$ is the stick mass, and the effective stick length $l = \frac{2}{3}l'$ where $l'$ is the actual stick length. Note that center of mass (COM) is below $l$. Henceforth, we refer $m$, $M$, and $l$ as stick mass, cart mass, and stick length respectively. 

\begin{table}[t!]	
	\centering
	\caption{Linearized and Laplace-transformed forms of the cart-pendulum model.}
	\begin{threeparttable}
	\begin{tabular}{cc}
	 \hline \vspace{-0.75em} \\
		Linearized\tnote{*} & Laplace transformed\tnote{\textdagger} \\ \vspace{-0.75em} \\ \hline \vspace{-0.75em} \\
	 $ (M+m)\ddot{x}\pm m l \ddot{\theta} =u+r$ & \multirow{2}{*}{	$\left[\begin{array}{c}
				\hat{x} \\
				\hat{\theta}
				\end{array}\right]= \frac{1}{D(s)} \left[\begin{array}{c}
				l s^2 \mp g \\
				\mp s^2
				\end{array}\right]
				~(\hat{u} + \hat{r}) $} 	 \\
	$m(\pm \ddot{x} + l\ddot{\theta} \mp g\theta) = 0$ &  \\
	$z = x \pm l_0 \theta $ & $\hat{z} = \frac{(l-l_0)s^2\mp g }{D(s)} ~ (\hat{u} + \hat{r})$   \\
    $y = z + n$ & $\hat{y} =  \hat{z} + \hat{n}$ \\  \vspace{-0.75em} \\ \hline	
\end{tabular}
\begin{tablenotes}
       \item[*] The top (bottom) sign in $\pm$ or $\mp$ corresponds to linearization around up (down) equilibrium.
       \item[\textdagger] $D(s) = s^2(M l s^2 \mp (M+m)g) $
     \end{tablenotes}
\end{threeparttable}	
	\label{tab:laplace}
\end{table}

In stick balancing, the controller represents the human brain. The controller's goal is to keep the stick is upright that is the magnitude of $z$ is desired to be small because it corresponds to how far the pendulum has drifted away from the desired equilibrium point. Consequently, we focus on the linearized dynamics of \eqref{eq:ndyanmics} around the equilibrium. Table \ref{tab:laplace} shows the linearization of \eqref{eq:ndyanmics} and the corresponding Laplace-transformed forms. %Fig. \ref{fig:InvertedPendulum_block} shows the block diagram of the linear model where $P(s) =\frac{ (l-l_0)s^2\mp g}{D(s)}$ and $C(s)$ is the controller.

%%%%%%%%%%%%%%%%%%%%%%%%%%%%%%%%%%%%%%%%%%%%%%%%%%%%%%%%%%%%%%%%%%%%%%%%%%%%%%%%%%%%%%%%%%%%%%%%%%%%%%%%%%%%%%%%%
\section{POLE AND ZERO OF A SYSTEM}\label{sec:poleandzero} 

The pole and zero of a system provide useful insights on the system's behavior. The poles and zeros of the open loop plant in this case study are given in Table \ref{tab:poles_zeros}. Note that the poles and zeros differ depending on whether the stick is pointing upright or downward. 

\begin{table}[b!]
		\centering
		\caption{Poles and zeros of the cart-pendulum model.}
		\begin{tabular}{ccc}
			\multicolumn{3}{c}{Open loop plant: $P(s) =\frac{ (l-l_0)s^2\mp g}{D(s)}$} \vspace{0.5em}\\ \hline \vspace{-0.75em} \\
			Positions & Poles & Zeros \vspace{0.25em}\\ \hline \\
		 &  & $\pm i\sqrt{\frac{g}{l_0-l}}$ if $l_0 > l$ \\
  Upright&   $0$, $\pm \sqrt{\frac{(M+m)g}{M l}}$ &  none if $l_0 = l$\\
	& & $\pm \sqrt{\frac{g}{l-l_0}}$ if $l_0 < l$ \vspace{0.5em} \\ \hline\\
& & $\pm  \sqrt{\frac{g}{l_0-l}}$ if $l_0> l$ \\
Downward &   $0$, $\pm i \sqrt{\frac{(M+m)g}{M l}}$ & none if $l_0 = l$\\
 & & $\pm i \sqrt{\frac{g}{l-l_0}}$ if $l_0< l$ \vspace{0.5em}\\ \hline
	\end{tabular}
	
	\label{tab:poles_zeros}
\end{table}

\subsection{Poles}
Intuitively, stabilizing the stick at the downward position is trivial, but stabilizing the stick at the upright position is comparatively more difficult. The differences in difficulty are captured by the poles. Generally, a pole, represented as a complex number, quantifies the stability of a system. The real part of a pole quantifies system stability, while the imaginary part of a pole represents the resonance frequency. 

The pole for the downward position has a zero real part implying that the system is marginally stable. However, its imaginary part increases with the stick length $l$, and thus, the resonance frequency of the system increases with $l$. 

On the other hand, the pole for the upright position has a positive real part implying that the system is unstable. Hence, the upright position is harder to stabilize than the downward position. The positive pole is generally called a right half plane (RHP) pole. In fact, as the stick length decreases (e.g. shorter stick), the magnitude of the pole increases, and thus, the system is more unstable and harder to control. This last statement may be counter intuitive because a tall object is usually less stable than a short object. Nonetheless, one could easily verify that a shorter stick is harder to balance than a longer stick at home using an extendable stick.

\subsection{Zeros}
A more interesting quantity to consider is the zero. A real positive zero, called a RHP zero, quantifies the effect of measurement. In this case study, zero is related to the fixation point or the location on the stick where a person looks at during stick balancing. When a person look at a certain point on a stick while balancing the stick, the stick may sometimes look stationary although it is moving. The location of this point on the stick defines the zero, and the magnitude of the zero is the critical stick swing frequency when this phenomena occurs. 

During stick balancing, as a person shifts the fixation point from $l$ towards the bottom of the stick, the magnitude of zero decreases from infinity to a small real number. As a result, the critical stick swing frequency decreases and becomes closer to the nominal stick balancing frequency. Therefore, the stick becomes harder to observe and invariably, control. 

Henceforth, we focus on the upright position that resembles human stick balancing and exhibits more interesting behaviors because of the RHP pole and RHP zero. The next section will further quantify the role of RHP pole, RHP zero and delay in system performance using the Bode's integral. 

%%%%%%%%%%%%%%%%%%%%%%%%%%%%%%%%%%%%%%%%%%%%%%%%%%%%%%%%%%%%%%%%%%%%%%%%%%%%%%%%%%%%%%%%%%%%%%%%%%%%%%%%%%%%%%%%%
\section{BODE'S INTERGRAL - A MEASURE OF ROBUSTNESS} \label{sec:robustness}

\begin{figure}[b!]
	\centering
	\includegraphics[clip =true,  trim=3.3cm 4.5cm 3.3cm 6.5cm, width = \linewidth]{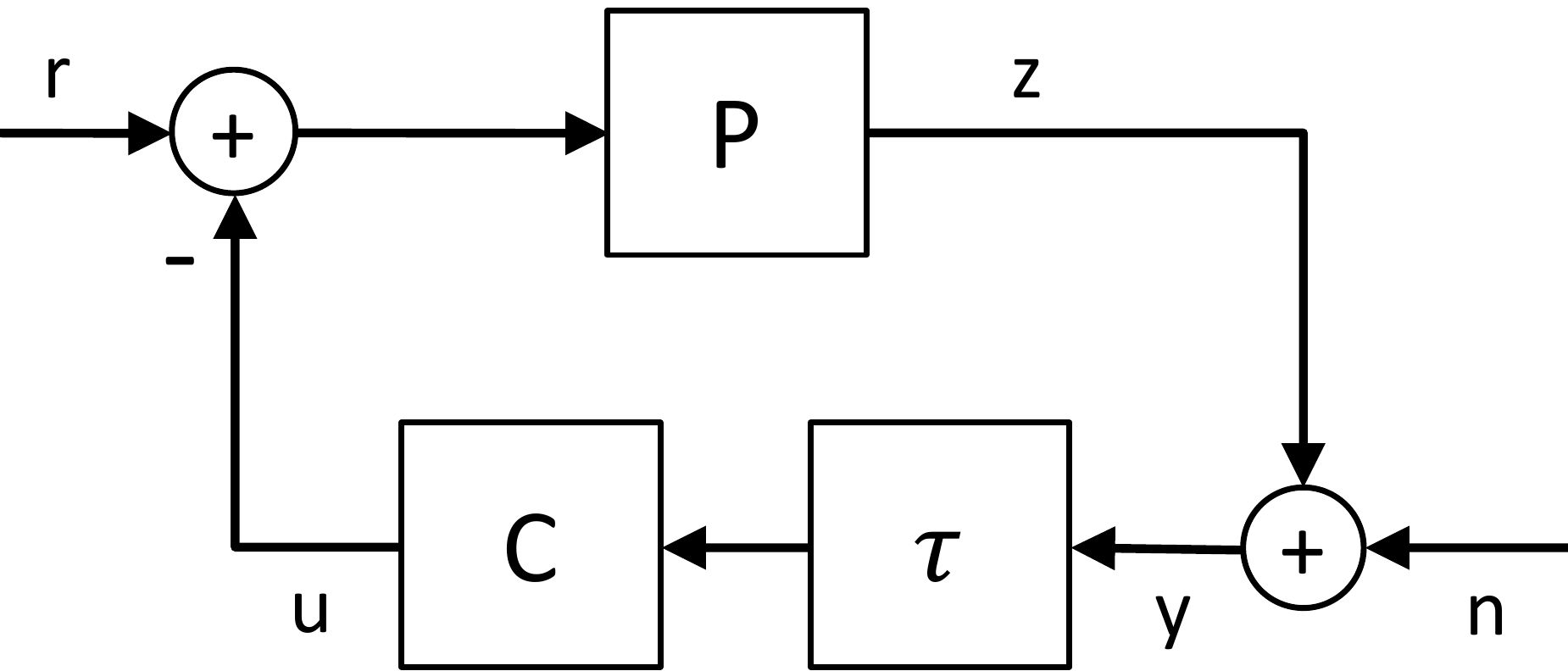}
	\caption{The linear feedback model of the system.}
	\label{fig:InvertedPendulum_block}
\end{figure}

Robust control theory studies the effects of uncertainties in a system, quantifying the system's fundamental limitations. In stick balancing, robustness is characterized by a person's ability to balance a stick. A robust system is equivalent to a setup where a person can balance a stick easily because uncertainties inherited in the human sensorimotor system affect the stick balancing task minimally. A fragile system implies otherwise. More precisely, robust control theory focuses on understanding the amplification of noise, termed robustness, in a closed-loop system and the impacts of RHP poles, RHP zeros, and delay on robustness using the $H_\infty$ norm and the Bode's integral \cite{doyle_dft}. 

%%%%%%%%%%%%%%%%%%%%%%%%%%%%%%%%%%%%%%%%%%%%%%%%%%%%%%%%%%%%%%%%%%%%%%%%%%%%%%%%%%%%
\subsection{Sensitivity Functions}

Sensitivity functions are quantities of interest in robust control theory because they are transfer functions from noise to the closed-loop system output. Effectively, they quantify the amplification of noise in a closed-loop system. To obtain the sensitivity functions, the closed-loop output $\hat{z}$ (see Fig. \ref{fig:InvertedPendulum_block}) is derived from equations in Table \ref{tab:laplace}
% \begin{equation}
% 	\hat{z} = \frac{((l-l_0)s^2 - g) \;\hat{r} - C(s)((l-l_0)s^2 - g)\;\hat{n} }{D(s)+((l-l_0)s^2 - g)C(s)}. \label{eq:clz}
% \end{equation}
\begin{equation}
	\hat{z} = \frac{P(s)}{1+P(s)C(s)e^{-\tau s}}\hat{r} - \frac{P(s)C(s)e^{-\tau s} }{1+P(s)C(s)e^{-\tau s}} \hat{n}. \label{eq:clz}
\end{equation}
% \begin{equation}
% 	\hat{z} = \frac{((l-l_0)s^2 - g)}{D(s)+((l-l_0)s^2 - g)C(s)} \hat{r} - \frac{C(s)((l-l_0)s^2 - g) }{D(s)+((l-l_0)s^2 - g)C(s)}\hat{n}. \label{eq:clz}
% \end{equation}

Define the sensitivity function, $S(s)$, and the complimentary sensitivity function, $T(s)$, \cite{doyle_dft} as
\begin{align}
S(s) = \frac{1}{1+P(s) C(s) e^{-\tau s}}, \quad
% & =  \frac{D(s)}{D(s)+((l-l_0)s^2 - g)C(s)}
% \end{align}
% \begin{align}
T(s) = \frac{P(s) C(s)e^{-\tau s}}{1+P(s) C(s)e^{-\tau s}}. \label{eqn:T} 
% & =  \frac{((l-l_0)s^2 - g)C(s) e^{-\tau s}}{D(s)+((l-l_0)s^2 - g)C(s)}. \label{eqn:T}
\end{align} 
Then, \eqref{eq:clz} can be rewrite as
\begin{equation*}
	\hat{z} =  P(s)S(s)\hat{r} - T(s) \hat{n}\label{eq:clperformance}.
\end{equation*} 
If $T(s)$ and $S(s)$ is small, for any given noises $r$ and $n$, the deviation $z$ will also be small. Note that given a RHP pole $p$ and a RHP zero $q$,
\begin{equation}
	S(q) = 1 \qquad T(p) = 1, \label{eq:st-one}
\end{equation}
an important fact that is used later.

%%%%%%%%%%%%%%%%%%%%%%%%%%%%%%%%%%%%%%%%%%%%%%%%%%%%%%%%%%%%%%%%%%%%%%%%%%%%%%%%%%%%
\subsection{$H_\infty$ Norm}\label{sec:hinf norm}
 % Note that given a RHP pole $p$ and a RHP zero $z$, then
% \begin{equation}
% 	S(z) = 1 \qquad T(p) = 1. \label{eq:st-one}
% \end{equation}
In stick balancing, the deviation $z$ of the stick from the equilibrium point is desired to be small for any given noises $r$ and $n$. Hence, define the performance measure of the system as the infinity norm of the closed-loop transfer functions $T(s)$ and $S(s)$ from noise sources to $z$.
\begin{definition}[$H_\infty$ norm]
The infinity ($H_\infty$)  norm of a transfer function is defined as
\begin{equation*}
	\norm{G}_{\infty} = \sup_\omega |G(j\omega)| = \sup_{Re ~ s >0} |G(s)|
\end{equation*}
where the last equality is given by the Maximum Modulus Theorem \cite{doyle_dft}.
\end{definition}
\noindent By this definition, a system is robust when the infinity norm of $T(s)$ and $S(s)$ are small (i.e. when the magnitude of the transfer function at all frequencies is small). 

In stick balancing, the impacts of the measurement noise and the actuation noise (e.g. neural signal noise during muscle activation) on system robustness are similar, but the feedback delay will only amplify the measurement noise. Hence, we assume that the limiting factor is the sensing noise (see Section \ref{sec:discussion} for more details), and the analysis henceforth will focus on $T(s)$. Nonetheless, similar analysis can be applied to study the effect of actuation noise. The infinity norm of $T(s)$ is given as follows.
\begin{theorem} \label{thm:hinfbound}
The infinity norm of $T(s)$ is bounded by
\begin{gather}
\ln \norm{T(s)}_{\infty} \geq F \triangleq  % MathType!MTEF!2!1!+-
\left\{ {\begin{array}{cc}
\tau p&{{l_0} \ge l}\\
{\tau p + \ln \left| {\frac{{p + q}}{{p - q}}} \right|}&{{l_0} < l}
\end{array}} \right. \label{eqn:hinfT} \\
p = \sqrt{\frac{(M+m)g}{M l}} \qquad \quad q  = \sqrt{\frac{g}{l-l_0}} \nonumber
\end{gather}
where $T(s)$ is the transfer function from measurement noise $n$ to output $z$ given in \eqref{eqn:T}, $F$ is the system fragility, $p$ is the RHP pole, $q$ is the RHP zero, and $\tau$ is the total delay in the control system from measurement to actuation.
\end{theorem}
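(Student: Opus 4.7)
The plan is to exploit the interpolation constraint $T(p)=1$ from \eqref{eq:st-one} together with the maximum modulus principle, after stripping off the factors of $T$ whose magnitude is identically one on the imaginary axis.

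First I would argue on structural grounds that any internally stabilizing $T(s)$ must carry the delay factor $e^{-\tau s}$ and, in the case $l_0 < l$, must inherit the RHP zero $q$ of the plant (because $C$ cannot cancel RHP zeros of $P$ without destroying internal stability). This lets me write $T(s) = e^{-\tau s}\,B(s)\,\tilde T(s)$, where $B(s)=\frac{s-q}{s+q}$ is the Blaschke factor when $l_0<l$ and $B(s)\equiv 1$ otherwise, and $\tilde T(s)$ is analytic in the closed right half-plane. The key observation is that both $e^{-\tau s}$ and $B(s)$ have unit modulus on $s=j\omega$, so $\|T\|_\infty = \|\tilde T\|_\infty$.

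Next I would evaluate the factored identity at the RHP pole $s=p$ using $T(p)=1$. In the case $l_0\ge l$, this gives $\tilde T(p) = e^{\tau p}$ directly. In the case $l_0 < l$, it gives $\tilde T(p) = e^{\tau p}\,\frac{p+q}{p-q}$ (taking the absolute value to match the statement of the theorem, since $p\ne q$ are both real and positive).

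Finally I would invoke the Maximum Modulus Theorem on $\tilde T$: because $\tilde T$ is analytic and bounded in the closed RHP, $\|\tilde T\|_\infty = \sup_\omega |\tilde T(j\omega)| \ge |\tilde T(p)|$. Combining with $\|T\|_\infty=\|\tilde T\|_\infty$ and taking logarithms yields the two cases of \eqref{eqn:hinfT}. The main obstacle, and the only step that really requires care, is the factorization step: I need to justify that the RHP zero $q$ of $P$ actually appears as a zero of $T$ (so that dividing by $B$ leaves an analytic function), and that $e^{-\tau s}$ can be treated as an all-pass factor rather than something requiring rational approximation. Both points follow from internal stability of the closed loop together with the observation that $|e^{-\tau j\omega}|=1$, but they are the substantive content hidden behind what otherwise looks like a one-line application of maximum modulus.
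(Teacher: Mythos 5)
Your proposal is correct and follows essentially the same route as the paper's proof: the paper factors $T = T_{mp}T_{ap}$ with $T_{ap}(s) = \frac{s-q}{s+q}e^{-\tau s}$ (your $B(s)e^{-\tau s}$), uses the interpolation constraint $T(p)=1$ to get $T_{mp}(p) = T_{ap}(p)^{-1}$, and applies the Maximum Modulus Theorem to $T_{mp}$. Your added care in justifying that internal stability forces $T$ to inherit the plant's RHP zero and the delay factor is a point the paper's proof leaves implicit, but the argument is the same.
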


\begin{proof}
	Factor the complimentary sensitivity function as 
	\begin{equation*}
		T(s) = T_{mp}(s) T_{ap}(s) 
	\end{equation*}
	where $mp$ is minimum-phase and $ap$ is all-pass.

	When $l_0 < l$, there is only one RHP zero,
	\begin{equation*}
		T_{ap}(s) = \frac{s-q}{s+q}e^{-\tau s}.
	\end{equation*}
	Then, by \eqref{eq:st-one},
	\begin{equation*}
		T_{mp}(p) = T_{ap}(p)^{-1} = \frac{p+q}{p-q}e^{\tau p}.
	\end{equation*}	
	Hence,
	\begin{align*}
		\norm{T(s)}_{\infty}& = \sup_{Re ~ s >0} |T(s)| =\sup_{Re ~ s >0} |T_{mp}(s)|=\norm{T_{mp}(s)}_{\infty}  \\
		&\ge |T_{mp}(p)|= \left|\frac{p+q}{p-q}e^{\tau p}\right|. 
	\end{align*}
	
	When $l_0\geq l $, there is no RHP zero, 
	\begin{equation*}
		\norm{T(s)}_{\infty} \ge |T_{mp}(p)| = e^{\tau p}.
	\end{equation*}
\end{proof}

The $H_\infty$ norm quantifies the worst case noise amplification, and the lower bound $F$ provides the smallest possible worst case noise amplification, the fundamental limit of this system independent of the controller. In engineering, given this information, system engineers decide on whether this fundamental limit is acceptable. If not, the system needs to be redesigned. In sensorimotor control task, the fundamental limit defines a human's best performance in the task using the noisy nervous system regardless of the decision making process of the brain.
Discussions of the bound with respect to stick balancing will be presented after we introduce the Bode's integral and the waterbed effect because this bound will again present itself. 

%%%%%%%%%%%%%%%%%%%%%%%%%%%%%%%%%%%%%%%%%%%%%%%%%%%%%%%%%%%%%%%%%%%%%%%%%%%%%%%%%%%%
\subsection{Bode's Integral}

Similar to the $H_\infty$ norm, the Bode's integral places a hard lower bound on the net achievable robustness of a control system in the presence of measurement noise. A small integral implies a less fragile (i.e. more robust) system is possible, and sensor noise does not necessarily prevent stabilizing the stick. %In other words, given a particular stick balancing task, the smallest possible amplification effect of the visual measurement noise is given by the fragility $F$. 
% This section introduces the Bode's integral.
\begin{lemma} \label{lem:bodeintegral}
The Bode's integral is given by
\begin{equation}
\ln |G_{mp}(s_0)| = \frac{1}{\pi}\int^\infty_{-\infty} \ln |G(j\omega)| \frac{\sigma_0}{\sigma_0^2+(\omega-\omega_0)^2} d\omega \label{eq:bode0}
\end{equation}
where $s_0 = \sigma_0+\omega_0 j$ with $\sigma_0 > 0$, $G(s)$ is any transfer function, and $G_{mp}(s)$ is a minimum phase transfer function of $G(s)$. 
\end{lemma}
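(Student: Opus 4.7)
The plan is to reduce the statement to the classical Poisson integral formula for the open right half plane, applied to the logarithm of the minimum-phase factor. First I would factor $G(s)=G_{mp}(s)\,G_{ap}(s)$, where $G_{ap}$ is the all-pass Blaschke product built from the RHP zeros of $G$ (together with any delay factor $e^{-\tau s}$), so that $|G_{ap}(j\omega)|=1$ on the imaginary axis. This immediately gives $|G(j\omega)|=|G_{mp}(j\omega)|$, hence the integrand on the right-hand side of \eqref{eq:bode0} depends only on $G_{mp}$, and it suffices to prove the identity with $G$ replaced by $G_{mp}$.

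Next, because $G_{mp}$ has neither poles nor zeros in the open RHP, the function $f(s)\triangleq\ln G_{mp}(s)$ is single-valued and analytic there. I would then invoke the Poisson integral representation
\begin{equation*}
f(s_0)=\frac{1}{\pi}\int_{-\infty}^{\infty} f(j\omega)\,\frac{\sigma_0}{\sigma_0^2+(\omega-\omega_0)^2}\,d\omega ,
\end{equation*}
and take real parts. Since $\operatorname{Re} f(s)=\ln|G_{mp}(s)|$ and the Poisson kernel is real, this yields the claimed equation once the substitution $|G_{mp}(j\omega)|=|G(j\omega)|$ from the previous step is made.

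To justify the Poisson formula itself, I would apply Cauchy's integral formula to $f$ on the contour consisting of the segment $[-jR,jR]$ of the imaginary axis closed by a large semicircle into the RHP, with $s_0$ inside. To turn the Cauchy kernel $1/(s-s_0)$ into the symmetric Poisson kernel I would add the auxiliary kernel $1/(s+\overline{s_0})$, whose pole $-\overline{s_0}$ lies in the LHP and thus contributes nothing to the closed contour. On $s=j\omega$ the combined kernel simplifies algebraically to $2\sigma_0/[\sigma_0^2+(\omega-\omega_0)^2]$, which is precisely the Poisson kernel. Letting $R\to\infty$ and checking that the semicircular arc contributes zero in the limit finishes the derivation.

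The main obstacle is precisely this last decay check: $\ln G_{mp}(s)$ need not vanish at infinity, and for a proper rational $G_{mp}$ it can grow logarithmically in $|s|$, which is borderline for the semicircle estimate. The standard remedy is to apply the Poisson representation to $f(s)-f(\infty)$ instead of $f(s)$, exploiting the fact that the Poisson kernel integrates to unity so that the correction $\ln G_{mp}(\infty)$ appears on both sides and cancels. Equivalently, one may work with $\ln\bigl(G_{mp}(s)/G_{mp}(\infty)\bigr)$, which does decay on the arc. For the stick-balancing application the complementary sensitivity $T(s)$ is even strictly proper, so $T(\infty)=0$ and no such renormalization is needed; the formula applies directly and produces the bound used in Theorem~\ref{thm:hinfbound}.
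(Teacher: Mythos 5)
Your proposal is correct and is essentially the standard derivation from the reference the paper cites for this lemma (the paper itself gives no proof beyond ``See \cite{doyle_dft}''): factor out the all-pass part so that $|G(j\omega)|=|G_{mp}(j\omega)|$, apply the half-plane Poisson integral representation to the analytic function $\ln G_{mp}$, and take real parts. One small remark on your closing aside: when $T$ is strictly proper the justification is not that $T(\infty)=0$ makes renormalization unnecessary (indeed $\ln T_{mp}(s)$ then diverges at infinity, so subtracting $\ln T_{mp}(\infty)$ is meaningless); rather, the combined kernel $\frac{1}{s-s_0}-\frac{1}{s+\overline{s_0}}=\frac{2\sigma_0}{(s-s_0)(s+\overline{s_0})}$ decays like $1/R^2$ on the arc, which dominates the $O(\ln R)$ growth of $\ln G_{mp}$ and makes the semicircle contribution vanish without any renormalization in either the proper or strictly proper case.
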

\begin{proof}
	See \cite{doyle_dft}.
\end{proof}

In stick balancing, the Bode's integral of $T(s)$ is bounded as follows.
\begin{theorem} \label{thm:bodeintegral}
The Bode's integral of $T(s)$ is bounded by
\begin{gather}
	\frac{1}{\pi}\int^\infty_{-\infty} \ln |T(j\omega)|~ \frac{p}{p^2+\omega^2} ~d\omega \ge F
	% p = \sqrt{\frac{(M+m)g}{M l}} \qquad \quad q  = \sqrt{\frac{g}{l-l_0}} \nonumber
% \frac{1}{\pi}\int^\infty_{-\infty} \ln |T(j\omega)| \frac{p}{p^2+\omega^2} d\omega \geq \tau p + % MathType!MTEF!2!1!+-
% \left\{ {\begin{array}{*{20}{c}}
% 0&{{l_0} \ge l}\\
% {\ln \left| {\frac{{z + p}}{{z - p}}} \right|}&{{l_0} < l}
% \end{array}} \right.
\label{eq:bode}
\end{gather}
where $T(s)$ is the transfer function from measurement noise $n$ to output $z$ given in \eqref{eqn:T}, $F$ is the system fragility, and $p$ is the RHP pole. $F$ and $p$ are defined as in \eqref{eqn:hinfT}.
\end{theorem}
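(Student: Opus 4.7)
The plan is to apply Lemma \ref{lem:bodeintegral} to $T(s)$ at the single evaluation point $s_0 = p$, and then to reduce the resulting left-hand side $\ln|T_{mp}(p)|$ to $F$ by reusing the inner/outer factorization already carried out in the proof of Theorem \ref{thm:hinfbound}.

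First, I would specialize Lemma \ref{lem:bodeintegral} by choosing $s_0 = p$, so that $\sigma_0 = p > 0$ and $\omega_0 = 0$. The Poisson kernel $\frac{\sigma_0}{\sigma_0^2+(\omega-\omega_0)^2}$ then reduces to $\frac{p}{p^2+\omega^2}$, which is exactly the weight appearing in \eqref{eq:bode}. Applying the lemma to $G = T$ therefore supplies the identity
\begin{equation*}
\ln|T_{mp}(p)| \;=\; \frac{1}{\pi}\int_{-\infty}^{\infty} \ln|T(j\omega)|\,\frac{p}{p^2+\omega^2}\,d\omega.
\end{equation*}

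Next, I would evaluate $|T_{mp}(p)|$ by recycling the factorization $T(s) = T_{mp}(s)T_{ap}(s)$ from the proof of Theorem \ref{thm:hinfbound}. When $l_0 < l$ the all-pass factor is $T_{ap}(s) = \frac{s-q}{s+q} e^{-\tau s}$, and the interpolation constraint $T(p)=1$ from \eqref{eq:st-one} gives $T_{mp}(p) = T_{ap}(p)^{-1}$, hence $|T_{mp}(p)| = \bigl|\frac{p+q}{p-q}\bigr| e^{\tau p}$. When $l_0 \geq l$ there is no RHP zero, so the all-pass factor reduces to $e^{-\tau s}$ and $|T_{mp}(p)| = e^{\tau p}$. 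In either case $\ln|T_{mp}(p)|$ equals $F$ as defined in \eqref{eqn:hinfT}, so the integral equals $F$ and the inequality in \eqref{eq:bode} holds (in fact with equality).

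The main conceptual point, rather than any real obstacle, is recognizing that Lemma \ref{lem:bodeintegral} is an \emph{equality} whose specialization at $s_0 = p$ simultaneously produces the correct Poisson kernel and a left-hand side we have already computed when proving Theorem \ref{thm:hinfbound}. Once those two observations are lined up, the proof is essentially a one-line substitution, and the theorem is best viewed as a repackaging of Theorem \ref{thm:hinfbound} in integral form rather than a genuinely new estimate.
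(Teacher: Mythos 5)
Your proposal is correct and follows essentially the same route as the paper: specialize Lemma \ref{lem:bodeintegral} at $s_0 = p$ to obtain the Poisson kernel $\frac{p}{p^2+\omega^2}$, then evaluate $\ln|T_{mp}(p)|$ via the all-pass factorization and the interpolation constraint $T(p)=1$ to get $F$ in both the $l_0 < l$ and $l_0 \ge l$ cases. Your observation that the bound actually holds with equality is also consistent with the paper's computation.
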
 

\begin{proof}
Similar to the proof of Theorem \ref{thm:hinfbound},
% factor the complimentary sensitivity function as
% % \begin{equation*}
% 	$T(s) = T_{mp}(s) T_{ap}(s) $
% % \end{equation*}
% where $mp$ is minimum-phase and $ap$ is all-pass.
when $l_0 < l$, there is only one RHP zero,
% \begin{equation*}
% 	T_{ap}(s) = \frac{z-s}{z+s}e^{-\tau s}.
% \end{equation*}
% Then,
\begin{equation*}
	T_{mp}(p) = T_{ap}(p)^{-1} = \frac{p+q}{p-q}e^{\tau p}.
\end{equation*}
Substitute into \eqref{eq:bode0},
\begin{align*}
	\frac{1}{\pi}\int^\infty_{-\infty} \ln |T(j\omega)| \frac{p}{p^2+\omega^2} d\omega & =  \ln |T_{mp}(p)| \nonumber\\	 
	%&= \ln\left( \left|\frac{z+p}{z-p}\right| e^{\tau p}\right) \nonumber \\
	&= \tau p + \ln\left|\frac{p+q}{p-q}\right|.
\end{align*}

When $l_0\geq l $, there is no RHP zero, 
\begin{align*}
	\frac{1}{\pi}\int^\infty_{-\infty} \ln |T(j\omega)| \frac{p}{p^2+\omega^2} d\omega %& =  \ln |T_{mp}(p)| \nonumber\\	 
	= \ln\left( e^{\tau p}\right) = \tau p .
\end{align*}
\end{proof}

%%%%%%%%%%%%%%%%%%%%%%%%%%%%%%%%%%%%%%%%%%%%%%%%%%%%%%%%%%%%%%%%%%%%%%%%%%%%%%%%%%%%
\subsection{The Waterbed Effect}

The Bode's integral is a conserved quantity that exhibits the waterbed effect. If $T(s)$ is small for a certain frequency range, then at some other frequency range, $T(s)$ will necessary be large, similar to a waterbed whereby one side will rise when you push at the other side. More precisely,

\begin{theorem}[Waterbed Effect]\label{thm:waterbed}
Suppose that a plant $P(s)$ has a RHP pole $p$. Then, there exist positive constant $c_1$ and $c_2$, depending only on $\omega_1$, $\omega_2$, and $p$, such that
\begin{equation} 
	c_1 \ln M_1 + c_2 \ln M_2 \ge \ln \mid T_{ap}(p) \mid^{-1} \ge 0
\end{equation}
where $[\omega_1,\omega_2]$ defines some frequency range, $T_{ap}(s)$ is the all-pass complimentary sensitivity function for the plant $P(s)$, $M_1 = \max_{\omega_1 \le \omega \le \omega_2} |T(j \omega)|$ and $M_2 = \norm{T(s)}_\infty$.
\end{theorem}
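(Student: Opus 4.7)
The plan is to reduce the waterbed inequality to a single evaluation of Bode's integral (Lemma \ref{lem:bodeintegral}) at the real base point $s_0 = p$, which sets $\sigma_0 = p$ and $\omega_0 = 0$ so that the Poisson weight collapses to the kernel $\frac{p}{p^2+\omega^2}$. Because $p$ is a RHP pole of $P$, the interpolation identity $T(p)=1$ from \eqref{eq:st-one} together with the inner/outer factorization $T = T_{mp}T_{ap}$ used already in Theorem \ref{thm:hinfbound} gives $T_{mp}(p) = T_{ap}(p)^{-1}$. Substituting into Lemma \ref{lem:bodeintegral} yields
\begin{equation*}
\ln |T_{ap}(p)|^{-1} \;=\; \frac{1}{\pi}\int_{-\infty}^{\infty} \ln |T(j\omega)| \, \frac{p}{p^2+\omega^2}\, d\omega,
\end{equation*}
the single identity on which the rest of the argument rests.

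Next I would split the line of integration into the symmetric band $B = [-\omega_2,-\omega_1]\cup[\omega_1,\omega_2]$ and its complement $B^c$, exploiting the evenness of $|T(j\omega)|$ for a real-coefficient plant. On $B$ the integrand is dominated pointwise by $\ln M_1$ times the (nonnegative) weight, and on $B^c$ by $\ln M_2$ times the weight, so defining
\begin{equation*}
c_1 \;=\; \frac{1}{\pi}\int_B \frac{p}{p^2+\omega^2}\, d\omega, \qquad c_2 \;=\; \frac{1}{\pi}\int_{B^c} \frac{p}{p^2+\omega^2}\, d\omega
\end{equation*}
immediately produces $c_1 \ln M_1 + c_2 \ln M_2 \ge \ln |T_{ap}(p)|^{-1}$. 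By construction both constants are strictly positive, depend only on $\omega_1$, $\omega_2$, and $p$, and satisfy $c_1 + c_2 = 1$ since $\frac{1}{\pi}\frac{p}{p^2+\omega^2}$ integrates to one over $\mathbb{R}$.

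For the right-hand inequality $\ln |T_{ap}(p)|^{-1} \ge 0$, I would invoke the structure of the all-pass part. Each Blaschke factor $\frac{s-q_i}{s+q_i}$ associated with a RHP zero $q_i$ has modulus one on the imaginary axis and is analytic in the closed right half plane, so by the maximum modulus principle $\left|\frac{p-q_i}{p+q_i}\right|\le 1$; for the case at hand with real $q$ this is checked by direct computation. The delay factor $e^{-\tau s}$ contributes $|e^{-\tau p}| = e^{-\tau p}\le 1$. Multiplying these, $|T_{ap}(p)|\le 1$, hence $\ln|T_{ap}(p)|^{-1}\ge 0$.

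The main obstacle I anticipate is bookkeeping rather than analysis: making sure the bounds on the two pieces of the integral remain valid regardless of whether $M_1$ or $M_2$ exceeds one. Since the weight is positive, the inequalities $\ln|T(j\omega)| \le \ln M_1$ on $B$ and $\ln|T(j\omega)| \le \ln M_2$ on $B^c$ integrate in the correct direction even when a logarithm is negative, so no extra cases are required. Beyond that the argument is pure calculus of the Poisson kernel, and no further control-theoretic input is needed beyond Lemma \ref{lem:bodeintegral} and the interpolation constraint $T(p)=1$.
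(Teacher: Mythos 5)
Your proof is correct and follows essentially the same route as the paper: evaluate the Poisson/Bode integral of Lemma \ref{lem:bodeintegral} at the pole, use $T_{mp}(p)=T_{ap}(p)^{-1}$, split the line of integration into the band $[-\omega_2,-\omega_1]\cup[\omega_1,\omega_2]$ and its complement to define $c_1$ and $c_2$, and bound the integrand by $\ln M_1$ and $\ln M_2$ respectively. The only differences are minor: you specialize to a real pole ($\omega_0=0$), which matches the stick-balancing application although the paper keeps $p=\sigma_0+j\omega_0$ general, and your explicit Blaschke-factor and delay computation showing $|T_{ap}(p)|\le 1$ is actually more careful than the paper's one-line appeal to the maximum modulus theorem (which, as printed, has the inequality on the wrong quantity).
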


\begin{proof}
%Proof is available at \cite{doyle_dft}.
Let $p = \sigma_0+j \omega_0$ and recall that $T_{mp}(p)=T_{ap}(p)^{-1}$. Then, from Lemma \ref{lem:bodeintegral},
\begin{align*}
	\ln |T_{ap}(p)^{-1}|& = \frac{1}{\pi}\int^\infty_{-\infty} \ln |T(j\omega)| \frac{\sigma_0}{\sigma_0^2+(\omega-\omega_0)^2} d\omega \nonumber \\
	& \le c_1 \ln M_1 + c_2 \ln M_2. 
	%&= \ln\left( \left|\frac{z+p}{z-p}\right| e^{\tau p}\right) \nonumber.
\end{align*}
where $c_1$ is the integral of $\frac{1}{\pi}\frac{\sigma_0}{\sigma_0^2+(\omega-\omega_0)^2}$ over the domain $[-\omega_2,-\omega_1] \cup [\omega_1,\omega_2]$ and $c_2$ is the same integral over the complimentary domain.
By Maximum Modulus Theorem, $|T_{ap}(p)^{-1}| \le 1$. Thus, $\ln |T_{ap}(p)^{-1}| \ge 0$.
\end{proof}

Next, we relate Theorem \ref{thm:waterbed} to stick balancing.
\begin{theorem}\label{thm:waterbedip}
	The complimentary sensitivity function $T(s)$ satisfies
\begin{equation} 
	c_1 \ln M_1 + c_2 \ln M_2 \ge F 
\end{equation}	
where $F$ is the system fragility defined as in \eqref{eqn:hinfT}, $M_1 = \max_{\omega_1 \le \omega \le \omega_2} |T(j \omega)|$, $[\omega_1,\omega_2]$ defines some frequency range, $M_2 = \norm{T(s)}_\infty$, and $c_1$ and $c_2$ are constants that depend only on $w_1$, $w_2$, and $p$.
\end{theorem}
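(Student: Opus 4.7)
The plan is to observe that this theorem is essentially a direct corollary of Theorem \ref{thm:waterbed} combined with the explicit all-pass factorization of $T(s)$ already carried out in the proof of Theorem \ref{thm:hinfbound}. The RHP pole $p = \sqrt{(M+m)g/(Ml)}$ of the open-loop plant $P(s)$ is exactly the $p$ appearing in Theorem \ref{thm:waterbed}, so that theorem immediately gives $c_1 \ln M_1 + c_2 \ln M_2 \ge \ln|T_{ap}(p)|^{-1}$ for some $c_1,c_2$ depending only on $\omega_1,\omega_2,p$. It therefore suffices to show that $\ln|T_{ap}(p)|^{-1} = F$ for the stick-balancing plant.

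To do this, I would first factor $T(s) = T_{mp}(s) T_{ap}(s)$ as in the proof of Theorem \ref{thm:hinfbound}. Since $T_{mp}$ has no RHP singularities and the only RHP factors of $T$ are the delay $e^{-\tau s}$ together with the possible RHP zero $q$ (present precisely when $l_0 < l$), I can read off the all-pass part directly: $T_{ap}(s) = e^{-\tau s}$ when $l_0 \ge l$, and $T_{ap}(s) = \tfrac{s-q}{s+q}e^{-\tau s}$ when $l_0 < l$.

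Evaluating at $s = p$ then yields two cases. When $l_0 \ge l$, $|T_{ap}(p)|^{-1} = e^{\tau p}$, so $\ln|T_{ap}(p)|^{-1} = \tau p$, matching the top branch of $F$ in \eqref{eqn:hinfT}. When $l_0 < l$, $|T_{ap}(p)|^{-1} = \left|\tfrac{p+q}{p-q}\right| e^{\tau p}$, so $\ln|T_{ap}(p)|^{-1} = \tau p + \ln\left|\tfrac{p+q}{p-q}\right|$, matching the bottom branch. In both cases $\ln|T_{ap}(p)|^{-1} = F$, and substituting into the Waterbed inequality from Theorem \ref{thm:waterbed} completes the proof.

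Honestly there is no real obstacle here; the theorem is a packaging statement that specializes the general Waterbed Effect to the concrete stick-balancing plant. The only thing to be mildly careful about is making sure $p$ in Theorem \ref{thm:waterbed} is real (so $\sigma_0 = p$, $\omega_0 = 0$), which is the case since the unstable pole of the inverted pendulum is real positive; this means the Poisson kernel in the underlying Bode integral (Lemma \ref{lem:bodeintegral}) reduces to the symmetric kernel $\tfrac{p}{p^2 + \omega^2}$, which is consistent with Theorem \ref{thm:bodeintegral} and ensures the constants $c_1,c_2$ depend only on $\omega_1,\omega_2,p$ as claimed.
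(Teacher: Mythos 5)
Your proposal is correct and follows essentially the same route as the paper: both identify the all-pass factor $T_{ap}(s)$ from the factorization used in Theorem~\ref{thm:hinfbound}, observe that $\ln|T_{ap}(p)|^{-1} = F$ in each of the two cases ($l_0 \ge l$ and $l_0 < l$), and then substitute into the inequality of Theorem~\ref{thm:waterbed}. Your write-up is simply more explicit than the paper's one-line appeal to ``simple algebraic manipulations,'' and your remark that $p$ is real (so the Poisson kernel is the symmetric one) is a worthwhile clarification the paper leaves implicit.
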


\begin{proof}
	Recall that $T_{ap}(p)^{-1} =  \frac{p+q}{p-q}e^{\tau p}$ when there is a RHP zero $q$, and $T_{ap}(p)^{-1} = e^{\tau p}$ when there is no RHP zero. The rest follows from simple algebraic manipulations.
\end{proof}

In human sensorimotor control, if amplification $T(s)$ is small for some frequency range. In other words, $M_1$ is small. Then, by Theorem \ref{thm:waterbedip}, $M_2$ has to be large, implying that $T(s)$ is large at some other frequency. Therefore, if $F$ is large, the system cannot have small $T(s)$ at all frequency. Furthermore, as the bound $F$ increases, the peak of $|T(s)|$ is likely to increase at some frequency. 

\begin{figure}[t!]
	\centering	
	\includegraphics[width = \linewidth,clip =true, trim=1.5cm 6.5cm 2cm 7cm]{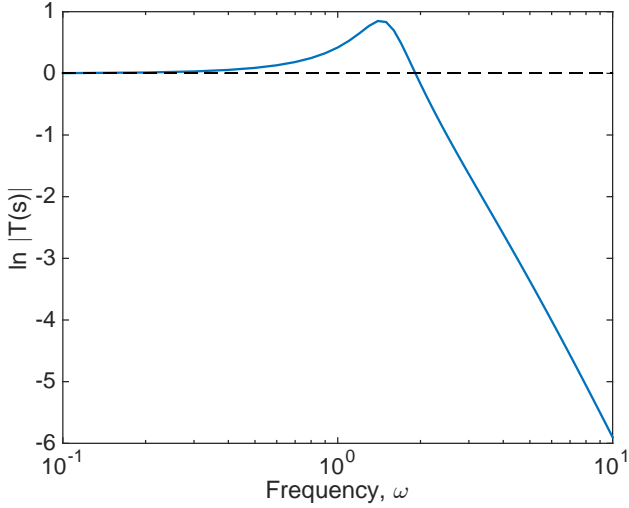}
	\caption{Magnitude of $T(s)$ in log scale as a function of frequency $\omega$ for the case study assuming $C(s) =10$. Note $s = j\omega $.}
	\label{fig:IPBodePlot}
\end{figure}
Fig. \ref{fig:IPBodePlot} shows the magnitude of $T(s)$ in log scale as a function of frequency $\omega$ for the case study assuming $C(s) =10$. The peak of $T(s)$ is within the low frequency range of 1-2 Hz, the stick oscillation frequency observed in experiments. In a more difficult stick balancing setting that causes larger $F$, the stick tends to oscillate at a larger amplitude. Theorem \ref{thm:waterbedip} suggests that the larger oscillation is an unavoidable because the bound $F$ increases.

%%%%%%%%%%%%%%%%%%%%%%%%%%%%%%%%%%%%%%%%%%%%%%%%%%%%%%%%%%%%%%%%%%%%%%%%%%%%%%%%%%%%
\subsection{Understanding the Fragility Bound, $F$}
The fragility bound $F$ is a function of delay, RHP pole, and RHP zero which are functions of system parameters including stick length $l$, fixation point $l_0$, stick mass $m$, and cart mass $M$. To further understand the impacts of these system parameters on $F$, $F$ is visualized in Fig. \ref{fig:measure2}, \ref{fig:IPfragility} and \ref{fig:IPHeatPlot}.

\begin{figure}[t!]
	\centering	
	\includegraphics[width=\linewidth,clip =true,  trim=1.5cm 6cm 2cm 7cm]{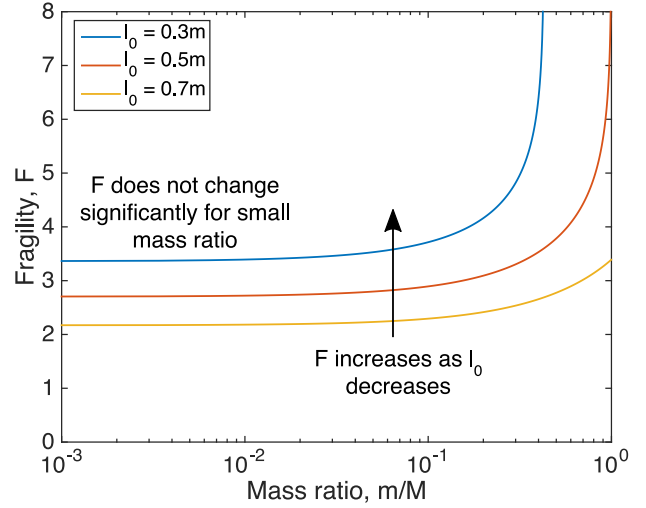}
	\caption{Fragility $F$ is the bound given in \eqref{eqn:hinfT}. Assume $M=3.25$kg, $m = 0.1$kg, and $l=1$m. As the fixation point increases, the fragility curve shifts upwards. Fragility does not vary much when the mass ratio is small.}
	\label{fig:measure2}
\end{figure}

Fig. \ref{fig:measure2} shows that the fragility is minimally affected by stick mass if the stick mass is small. When balancing a stick on one's hand assuming $M = 3.25$kg is the arm mass \cite{clauser1969weight}, the mass ratio is so small that the fragility is barely affected by the stick mass $m$. In practice, $M$ could be either a function of the arm mass or the body mass depending on if the person is allowed to move the body during stick balancing. Either way, $m$ is hardly one-tenth of the $M$. In fact, balancing an extremely heavy stick, for example a $20$kg gym bar ($m = 15$kg), is no more difficult than balancing a light stick. This heavy stick setup has a mass ratio of $m/M = 0.2$ if the person weights $70$kg ($M=75$kg), suggesting that the stick is still easy to balance (see Fig. \ref{fig:measure2}). However, in this situation, the stick becomes strenuous to hold up due to limited muscle strength and fatigue, effects not modeled by $F$.

%To further illustrate this statement, consider stationary pendulum that is oscillating according to the frequency $\sqrt{g/l}$. The magnitude of oscillation depends on external disturbances that are not affected by stick mass. Hence, the stick mass does not affect the system indicating that open-loop oscillation (i.e. without control) is robust against changes in mass. For the upright position, an active control is necessary to ensure that the stick stays upright or oscillates around the upright equilibrium point. The fragility $F$ measures the difficulty of this active control.

Fig. \ref{fig:IPfragility} plots fragility $F$ as a function of fixation point $l_0$ or stick length $l$ under four different assumptions. The solid red curve represents $F$ when $l$ is varied, $l_0 = l$, and delay is $0.3$s, a typical value from the literature \cite{lennie_physiological_1981,nijhawan_visual_2008,cavanagh_electromechanical_1979}. The dashed red curve represents $F$ when fixation point $l_0$ is varied, $l=1$m, and delay is $0.3$s. The blue curve represents $F$ when $l$ is varied, $l_0 = l$, and delay is $0.2$s, which is likely to be unrealistically fast, but illustrates the extreme impact of small changes in delay. The yellow curve represents $F$ when $l$ is varied, $l_0 = l$, and delay is $0.5$s. 

\begin{figure}[t!]
	\centering	
	\includegraphics[width = \linewidth,clip =true, trim=1.5cm 6cm 2cm 7cm]{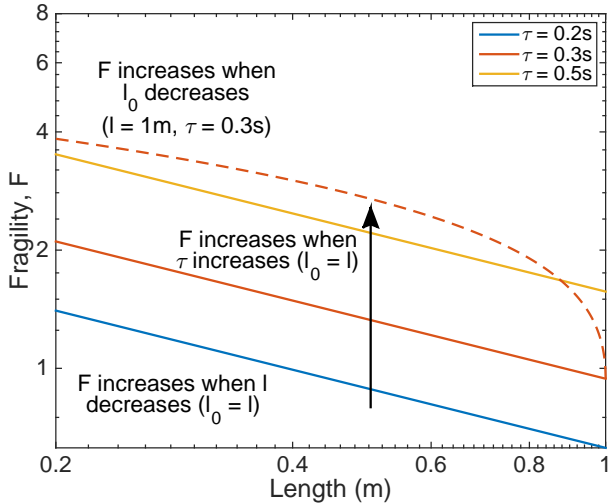}
	\caption{Fragility $F$ is the bound given in \eqref{eqn:hinfT}. Assume $M=3.25$kg, and $m = 0.1$kg. The solid red curve represents $F$ when $l$ is varied, $l_0 = l$ and delay is $0.3$s, the dashed red curve represents $F$ when $l_0$ is varied, $l=1$m, and delay is $0.3$s, the blue curve represents $F$ when $l_0 = l$ and delay is $0.2$s, and the yellow curve represents $F$ when $l_0 = l$ and delay is $0.5$s. Fragility decreases when stick center of mass increases, and fragility increases when delay increases.}
	\label{fig:IPfragility}
\end{figure}

Fig. \ref{fig:IPfragility} emphasizes the influences of stick length, fixation point, and delay. If the stick is short, the fragility increases (see solid lines in Fig. \ref{fig:IPfragility}). In other words, a short stick is harder to balance than a long stick. If the fixation point (while blocking peripheral visions) is moved from $l$ to a lower point along the stick, the stick becomes harder to balance (explained by the dashed red curve in Fig. \ref{fig:IPfragility}). Theoretically, as the fixation point decreases, the system zero moves from infinity towards the pole thus increasing the fragility $F$. 

Interestingly, balancing a stick while focusing at a lower point is much harder than balancing a shorter stick while focusing at the tip (compare both red curves in Fig. \ref{fig:IPfragility}). This effect is more apparent when performing stick balancing while wearing a cap. The cap blocks the peripheral vision from providing information of the stick length $l$ making stick balancing more difficult when fixation point is below $l$. This observation implies that fixation point degrades robustness even more than stick length. 

Lastly, increasing delay, $\tau$, moves the performance curves in Fig. \ref{fig:IPfragility} towards the northeast corner of the plot, and therefore, delay degrades the fundamental limit of this system. This result fits the observation that a trained person can balance the inverted pendulum better than an untrained person because generally, a trained person has a shorter delay in the sensorimotor system \cite{newell1981mechanisms,newell1991motor}. 

Fig. \ref{fig:IPHeatPlot} shows fragility as a function of both $l$ and $l_0$.  It nicely captures the qualitative features of the problem, and even matches qualitatively what can be observed experimentally. It is easy for a person to try to stabilize an extendable pointer in various $(l,l_0)$ points on the figure, and he/she will find that leaving the dark blue region makes stabilization increasingly difficult and ultimately impossible. 

\begin{figure}[t!]
	\centering	
	\includegraphics[width = \linewidth,clip =true, trim=0.5cm 6cm 1.9cm 6.9cm]{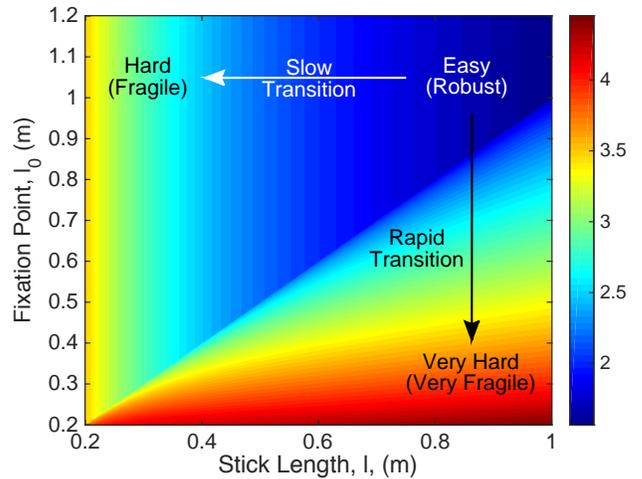}
	\caption{Fragility $F$ is the bound given in \eqref{eqn:hinfT}. Assume $M=3.25$kg, and $m = 0.1$kg. The color corresponds to fragility. Low fixation point affects robustness more than short stick length.}
	\label{fig:IPHeatPlot} 
\end{figure}

%%%%%%%%%%%%%%%%%%%%%%%%%%%%%%%%%%%%%%%%%%%%%%%%%%%%%%%%%%%%%%%%%%%%%%%%%%%%%%%%%%%%
\section{IMPLICATIONS ON SYSTEM DESIGN} \label{sec:discussion}

Apart from providing simple intuitions to important concepts in robust control theory, this case study highlights a few important issues in system design:

\subsection{There exist a fundamental tradeoff between robustness and efficiency.} 
By balancing a long stick and a short stick, one can easily conclude that a short stick is harder to balance than a long stick. When the delay is zero and the RHP zero does not exist, the Bode's integral bound $F$ is constant and zero. But, when a delay is considered, the bound explicitly increases with the RHP pole, which in turn increases as the stick length, $l$, decreases. The theoretical analysis show that given delays, it is impossible to be both robust (balance despite noise) and efficient (short stick). Robots or other organisms would face similar tradeoffs, with the absolute levels determined by controller delay and sensor noise.

\subsection{Delays have an enormous impact on robustness and efficiency tradeoff.}
Delay is fundamental to human sensorimotor control \cite{franklin2011computational} and large scale engineering systems. In human, delays are generally on the order of 100ms, longer for visual system \cite{lennie_physiological_1981,nijhawan_visual_2008,cavanagh_electromechanical_1979}. The simple analysis in this paper suggests that sensorimotor control feedback delay has a detrimental effect on a system as shown by curves in Fig. \ref{fig:IPfragility} moving towards the northeast corner as delay increases.

\paragraph{Vision is the limiting factor instead of actuation because of delay}
In this case study, vision and conscious thought processing are used to stabilize the stick. This task is not what vision is usually used for, and thus, resulting in slower responses. %During stick balancing, the AP plane (the plane we modeled) requires stereo vision for depth perception which is slow and noisy. However, actuation in this plane involve an easy forward backward motion of the arm and the body that is free to move. 
Consider this simple visual hand tracking demonstration: (a) Fix your head and shake your hand in front of your eyes; (b) Fix your hand and shake your head looking at your hand. Under same shaking frequency, the hand looks more blurry in (a) than in (b). The reason is because in (a), human tracks the hand using vision that has a large delay, and in (b), human stabilizes the head motion using the fast vestibulo-ocular reflex with minimal delay. In both case, the actuators, the eye muscles, are the same. This simple experiment shows that vision is the main limiting factor instead of actuation. In addition, it shows the role of delay in degrading the performance of a feedback system. %These observations are nicely captured by the differences between $T(s)$ (includes delay) and $S(s)$ (does not include delay) as discussed earlier.

\paragraph{Distributed architecture results in heterogeneous delays}
A particularly important feature of sensorimotor control and other large scaled systems is that they are \emph{distributed}, an issue that few theories address at all. The sensors, processing, communications, decision making, and actuation are distributed throughout the system in modules that communicate with each other with internal heterogeneous delays. In this case study, only a single lumped source of delay, primarily due to vision, is considered. This idealization is not particularly limiting here, but will be in general. 

\subsection{Sensing location has a significant impact on robustness} 
If the fixation point (while blocking peripheral visions) is moved from the tip of a stick to a lower point along the stick, the stick becomes harder to balance. These observations are consistent with experimental results \cite{Reeves_balance_2013,lee_2012_balancing}. Interestingly, balancing a stick while focusing at a lower point without using peripheral vision is much harder than balancing a shorter stick while focusing at $l$. This observation implies that the fixation point degrades robustness even more than the stick length (i.e. RHP zero is more detrimental than RHP pole). Thus, poorly designed sensing can greatly degrade achievable robustness beyond what is limited by other tradeoffs. 

In fact, beyond stick balancing, researchers recognized gaze location as an important parameters in performing sensorimotor control tasks \cite{marigold2007gaze,vansteenkiste2013visual,baldauf2010attentional,kandil2009driving,land2006eye}. The vision is traditionally passive in that it provides inputs passively to the brain which then processes the information and sends muscle commands to the body. Hence, the goal of a vision system is to obtain images that maximize information relevant to a task. However, we argue that vision has the potential to be more involved in shaping the performance of a system. By choosing the fixation point strategically, one can increase the robustness of a system against uncertainties, and invariably, reduces the effect of noise on the performance of a system. Hence, apart from maximizing information, the vision system could also aim to maximize robustness of a system.
%
% Beyond stick balancing task, many researchers established that people tend to have specific gaze behaviors when completing everyday tasks \cite{land2006eye} including locomotion \cite{marigold2007gaze,tatler2011eye,foulsham2011and,vansteenkiste2013visual}, driving \cite{kandil2009driving}, manipulation \cite{johansson2001eye}, and reaching and grasping \cite{baldauf2010attentional}. In addition to maximizing information as suggested by the previous works, the finding in this article suggests that robustness against uncertainties may potentially be one of the driving factors of human gaze behaviour when completing a specific task.

\subsection{Oscillation is a result of degrading robustness} 
Many biological systems and ``low level'' physiological processes such as temperature regulation, blood oxygenation, and others, are unstable \cite{Chandra08072011,mitchell2015oscillatory,li2014robust}. These systems are generally controlled automatically and unconsciously. Under specific circumstances, they exhibit oscillation that, we argue, is an inevitable side effect of degrading robustness in an unstable system.

When the fragility is increased for a set of system parameters, the amplitude of oscillations tends to increase as captured mathematically by Theorem \ref{thm:waterbed}. Because the Bode's integral is conserved, the waterbed effect prevents the closed-loop system from having minimal oscillations at all frequencies without fundamentally changing the system design. Note that the waterbed effect in $T(s)$ only applies when the system is unstable. Hence, for an unstable system, oscillation is unavoidable when robustness degrades from changing system parameters under a given circumstance.

%%%%%%%%%%%%%%%%%%%%%%%%%%%%%%%%%%%%%%%%%%%%%%%%%%%%%%%%%%%%%%%%%%%%%%%%%%%%%%%%
\section{CONCLUSIONS} \label{sec:conclusion}
This paper presents fundamental concepts of robust control theory including a discussion on the Bode's integral as a measure of robustness in term of a simple sensorimotor control task, human stick balancing. In this case study, the quantities commonly found in robust control theory and the effects of different system parameters on robustness is easily understandable and reproducible by balancing an extendable stick under a few different settings. Despite its simplicity, important issues in system design can be highlighted including the robustness and efficiency tradeoff, the role of delay, the importance of sensing location, and the effect of fragility on oscillations. 

Lastly, simple modifications while balancing a stick may raise interesting questions that can possibly be explored using robust control theory, such as the effect of peripheral vision, closing one eye, standing on one leg, moving and walking, a darker room, and others. We encourage readers, who are interested in further exploring the limitations of human sensorimotor control, to try these experiments at home.

%\addtolength{\textheight}{-3cm}   % This command serves to balance the column lengths
                                  % on the last page of the document manually. It shortens
                                  % the textheight of the last page by a suitable amount.
                                  % This command does not take effect until the next page
                                  % so it should come on the page before the last. Make
                                  % sure that you do not shorten the textheight too much.
                                  
%%%%%%%%%%%%%%%%%%%%%%%%%%%%%%%%%%%%%%%%%%%%%%%%%%%%%%%%%%%%%%%%%%%%%%%%%%%%%%%%
% \section{ACKNOWLEDGMENTS}

% \ypl{Funding source}

%%%%%%%%%%%%%%%%%%%%%%%%%%%%%%%%%%%%%%%%%%%%%%%%%%%%%%%%%%%%%%%%%%%%%%%%%%%%%%%%

\bibliographystyle{IEEEtran}
\bibliography{invertedPendulumref,background,relatedworkref}

\end{document}